\tikzset{
v/.style={
  circle, draw, inner sep=2pt, minimum size=6pt, fill=white},
l/.style={
  circle, draw, inner sep=2pt, minimum size=6pt, fill=black}
}
\theoremstyle{plain}
\newtheorem{theorem}{Theorem}[section]
\newtheorem{proposition}[theorem]{Proposition}
\newtheorem{lemma}[theorem]{Lemma}
\newtheorem{corollary}[theorem]{Corollary}
\theoremstyle{definition}
\newtheorem{example}[theorem]{Example}
\newtheorem{definition}[theorem]{Definition}
\newtheorem{remark}[theorem]{Remark}
 \newcommand \C {{\mathbb C}}
 \newcommand \A{{\mathcal A}}
\newcommand \spann  {{\rm span}}
\def \X(#1){\{x_1,\dots, x_{#1}\}}
\begin{document}

\title {On the Falk invariant of signed graphic arrangements}
\begin{abstract} The fundamental group of the complement of a hyperplane arrangement in a complex vector space is an important topological invariant. The third rank of successive quotients in the lower central series of the fundamental group was called \emph{Falk invariant} of the arrangement since Falk gave the first formula and asked to give a combinatorial interpretation. In this article, we give a combinatorial formula for the Falk invariant of a signed graphic arrangement that do not have a $B_2$ as sub-arrangement.
\end{abstract}

\author{Weili Guo}
\address{Department of Mathematics, Hokkaido University, Kita 10, Nishi 8, Kita-Ku, Sapporo 060-0810, Japan.}
\email{guowl@math.sci.hokudai.ac.jp}
\author{Michele Torielli}
\address{Department of Mathematics, Hokkaido University, Kita 10, Nishi 8, Kita-Ku, Sapporo 060-0810, Japan.}
\email{torielli@math.sci.hokudai.ac.jp}

\date{\today}
\maketitle


\section{Introduction}
A \textbf{hyperplane} $H$ in $\C^\ell$ is an affine subspace of dimension $\ell-1$. A finite collection $\A = \{H_1, \dots , H_n\}$ of hyperplanes is called a \textbf{hyperplane arrangement}. If $\bigcap_{i=1}^{n}H_i\ne\emptyset$, then $\A$ is called \textbf{central}. In this paper, we only consider central arrangements and assume that all the hyperplanes contain the origin. For more details on hyperplane arrangements, see \cite{orlik2013arrangements}.

Let $M:=\C^\ell\setminus_{H\in\A}H$ be the complement of the arrangement $\A$. It is known that the cohomology ring $H^*(M)$ is completely determined by $L(\A)$ the lattice of intersection of $\A$. Similarly to this result, there are several conjectures concerning the relationship between $M$ and $L(\A)$. To study such problems, Falk introduced in \cite{falk1990algebra} a multiplicative invariant, called \textbf{global invariant}, of the Orlik-Solomon algebra of $\A$. The invariant is now known as the ($3^{rd}$) \textbf{Falk invariant} and it is denoted by $\phi_3$. In \cite{falk2001combinatorial}, Falk posed as an open problem to give a combinatorial interpretation of $\phi_3$. 

Several authors already studied this invariant. In \cite{schenck2002lower}, Schenck and Suciu studied the lower central series of arrangements and described a formula for the Falk invariant in the case of graphic arrangements. In \cite{guo2017global}, the authors gave a formula for $\phi_3$ in the case of simple sign graphic arrangements. In the preprint  \cite{guo2017falkinvar}, the authors extended the previous result for sign graphic arrangements coming from graphs without loops. This article is devoted to extend these results further and to describe a combinatorial formula for the Falk invariant  of a signed graphic arrangement that do not have a $B_2$ as sub-arrangement. Our result gives a partial answer to the question posed by Falk in \cite{falk2001combinatorial}.

\section{Preliminares on Orlik-Solomon algebras}
Let $\A=\{H_1, \dots, H_n\}$ be an arrangement of hyperplanes in $\C^{\ell}$.
Let $E^1=\bigoplus_{j=1}^n\C e_j$ be the free module generated by $e_1, e_2, \dots, e_n$, where $e_i$ is a symbol corresponding to the hyperplane $H_i$.
Let $E=\bigwedge E^1$ be the exterior algebra over $\C$. The algebra $E$ is graded via $E=\bigoplus_{p=0}^nE^p$, where $E^p=\bigwedge^pE^1$.
The $\C$-module $E^p$ is free and has the distinguished basis consisting of monomials $e_S:=e_{i_1}\wedge\cdots\wedge e_{i_p}$,
where $S=\{{i_1},\dots, {i_p}\}$ is running through all the subsets of $\{1,\dots,n\}$ of cardinality $p$ and $i_1<i_2<\cdots<i_p$.
The graded algebra $E$ is a commutative DGA with respect to the differential $\partial$ of degree $-1$ uniquely defined
by the conditions $\partial e_i=1$ for all $i=1,\dots, n$ and the graded Leibniz formula. Then for every $S\subseteq\{1,\dots,n\}$ of cardinality $p$
$$\partial e_S=\sum_{j=1}^p(-1)^{j-1}e_{S_j},$$
where $S_j$ is the complement in $S$ to its $j$-th element.

For every $S\subseteq\{1,\dots,n\}$, put $\cap S=\bigcap_{i\in S}H_i$ (possibly $\cap S=\emptyset$). The set of all
intersections $L(\A):=\{\cap S\mid S\subseteq \{1,\dots,n\}\}$ is called the \textbf{intersection poset of $\A$}.
The subset $S\subseteq\{1,\dots,n\}$ is called \textbf{dependent} if $\cap S\ne\emptyset$
and the set of linear polynomials $\{\alpha_i~|~i\in S\}$ with $H_i=\alpha_i^{-1}(0)$, is linearly dependent.
\begin{definition}\label{def:osalgbr}
The \textbf{Orlik-Solomon ideal} of $\A$ is the ideal $I=I(\A)$ of $E$ generated by
\begin{enumerate}
\item[$(1)$] all $e_S$ with $\cap S=\emptyset$,
\item[$(2)$] all $\partial e_S$ with $S$ dependent.
\end{enumerate}
The algebra $A:=A^\bullet(\A)=E/I(\A)$ is called the \textbf{Orlik-Solomon algebra} of $\A$.
\end{definition}
Clearly $I$ is a homogeneous ideal of $E$ and $I^p=I\cap E^p$ whence $A$ is a graded algebra and we can write $A=\bigoplus_{p\ge 0} A^p$, where $A^p=E^p/I^p$.
If $\A$ is central, then for any $S\subseteq\A$, we have $\cap S\neq\emptyset$. Therefore, the Orlik-Solomon ideal is generated
by the elements of type $(2)$ from Definition \ref{def:osalgbr}.
In this case, the map $\partial$ induces a well-defined differential $\partial\colon A^\bullet(\A)\longrightarrow A^{\bullet -1}(\A)$.

Let $I_k$ be the $k$-adic Orlik-Solomon ideal of $\A$ generated by $\sum_{j\le k}I^j$ in $E$. It is clear that $I^k$ is a graded ideal and $I_k^p = (I_k)^p = E^p\cap I_k$. Write $A_k:=A^\bullet_k(\A) = E/I_k$ and
$A_k^p:= (A^\bullet_k(\A))^p=E^p/I_k^p$ which is called \textbf{$k$-adic Orlik-Solomon algebra} by Falk \cite{falk1990algebra}.


In this set up, it is now easy to define the Falk invariant.
\begin{definition} Consider the map $d$ defined by
$$d\colon E^1\otimes I^2\to E^3,$$
$$d(a\otimes b)=a\wedge b.$$
Then the \textbf{Falk invariant} is defined as
$$\phi_3:=\dim(\ker(d)).$$
\end{definition}

In \cite{falk1990algebra} and \cite{falk2001combinatorial}, Falk gave a beautiful formula to compute such invariant.
\begin{theorem}[Theorem 4.7, \cite{falk2001combinatorial}]\label{theo:falkinvar} Let $\A=\{H_1, \dots, H_n\}$ be an arrangement of hyperplanes in $\C^{\ell}$. Then
\begin{equation}\label{eq:falktheorem1}
\phi_3=2\binom{n+1}{3}-n\dim(A^2)+\dim(A^3_2).
\end{equation}
\end{theorem}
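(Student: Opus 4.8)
The plan is to compute $\phi_3 = \dim(\ker d)$ directly by applying rank--nullity to the linear map $d\colon E^1 \otimes I^2 \to E^3$, and then to translate each resulting term into the quantities appearing in \eqref{eq:falktheorem1}. Since $E^1 \otimes I^2$ and $E^3$ are finite-dimensional $\C$-vector spaces, we have
$$\phi_3 = \dim(\ker d) = \dim(E^1 \otimes I^2) - \dim(\operatorname{im} d).$$
The first term is immediate: $\dim(E^1 \otimes I^2) = \dim(E^1)\cdot\dim(I^2) = n\dim(I^2)$, and because $A^2 = E^2/I^2$ with $\dim E^2 = \binom{n}{2}$, we get $\dim(I^2) = \binom{n}{2} - \dim(A^2)$. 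Hence the first term equals $n\binom{n}{2} - n\dim(A^2)$.

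The heart of the argument is to identify $\operatorname{im} d$. By definition $\operatorname{im} d = E^1 \wedge I^2$, the $\C$-span of all $a \wedge b$ with $a \in E^1$ and $b \in I^2$. I claim this subspace is exactly the degree-$3$ piece $I_2^3$ of the $2$-adic Orlik--Solomon ideal $I_2$. Indeed, since $\A$ is central there are no type-$(1)$ generators, and every type-$(2)$ generator $\partial e_S$ has $|S| \ge 3$ (a dependent pair would force two equal hyperplanes), so $I^0 = I^1 = 0$ and $I$ is concentrated in degrees $\ge 2$. Consequently $I_2$, the ideal generated by $\sum_{j \le 2} I^j$, is simply the ideal generated by $I^2$, and its degree-$3$ component is spanned by the products $e_i \wedge \omega$ with $\omega \in I^2$, which is precisely $E^1 \wedge I^2$. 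Thus $\operatorname{im} d = I_2^3$, and since $A_2^3 = E^3/I_2^3$ with $\dim E^3 = \binom{n}{3}$, we obtain $\dim(\operatorname{im} d) = \binom{n}{3} - \dim(A_2^3)$.

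Substituting both computations into the rank--nullity identity yields
$$\phi_3 = n\binom{n}{2} - n\dim(A^2) - \binom{n}{3} + \dim(A_2^3),$$
and it remains only to verify the purely combinatorial identity $n\binom{n}{2} - \binom{n}{3} = 2\binom{n+1}{3}$, which follows by expanding both sides as polynomials in $n$. This reduces the expression to \eqref{eq:falktheorem1}.

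I expect the main obstacle to be the identification $\operatorname{im} d = I_2^3$: one must check that the $2$-adic ideal $I_2$ contributes no spurious low-degree generators, which is exactly where centrality of $\A$ is used, ruling out the type-$(1)$ generators and any relation in degree $\le 1$. Once this structural fact is established, the remaining steps are routine dimension counts and a binomial simplification.
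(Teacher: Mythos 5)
Your proof is correct. The paper itself does not prove this statement---it is quoted as Theorem 4.7 of Falk's paper---so there is nothing internal to compare against, but your argument is the natural one and is consistent with how the paper later uses the same facts: rank--nullity applied to $d$, the identification $\operatorname{im}(d)=E^1\wedge I^2=I_2^3$ (which the paper asserts as ``$I^3_2=I^2\cdot E^1$'' before its Lemma on the direct sum decomposition), and the binomial identity $n\binom{n}{2}-\binom{n}{3}=2\binom{n+1}{3}$ (which the paper invokes in the proof of its main theorem as $2\binom{n+1}{3}-n\binom{n}{2}+\binom{n}{3}=0$). Your care in checking that $I^0=I^1=0$ for a central arrangement with distinct hyperplanes, so that $I_2$ is just the ideal generated by $I^2$, is exactly the point that makes the identification of $\operatorname{im}(d)$ with $I_2^3$ legitimate.
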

\begin{remark}\label{rem:falkinvariantreduct} Since $\dim(A^3_2)=\dim((E/I_2)^3)=\dim(E^3)-\dim(I^3_2)$ and $\dim(E^3)=\binom{n}{3}$, then we obtain
\begin{equation}\label{eq:falktheorem}
\phi_3=2\binom{n+1}{3}-n\dim(A^2)+\binom{n}{3}-\dim(I^3_2).
\end{equation}
\end{remark}

Recall that $\phi_3$ can also be describe from the lower central series of the fundamental group $\pi(M)$ of the complement $M$ of the arrangement. In particular, if we consider the lower central series as a chain of normal subgroups $N_i$, for $k \ge 1$, where $N_1 = \pi(M)$ and $N_{k+1} = [N_k,N_1]$, the subgroup generated by commutators of elements in $N_k$ and $N_1$, then $\phi_3$ is the rank of the finitely generated abelian group $N_3/N_4$. See \cite{schenck2002lower} for more details.

\section{Sign graphs}
In this section we will recall the main properties of signed graphs. See \cite{zaslavsky1982signed} for a general treatment of such graphs.
\begin{definition} A \textbf{signed graph} is a tuple $ G=(V_{G},E_{G}^{+},E_{G}^{-},L_{G}) $, where
\begin{itemize}
\item $ V_{G} $ is a finite set called the set of vertices,
\item $ E_{G}^{+} $ is a subset of $ \binom{V_{G}}{2} $ called the set of positive edges,
\item $ E_{G}^{-} $ is a subset of $ \binom{V_{G}}{2} $ called the set of negative edges,
\item $ L_{G} $ is a subset of $ V_{G} $ called the set of loops.
\end{itemize}
\end{definition}

\begin{example}
In this article, we illustrate a signed graph as follows:
\begin{align*}
G = (V_{G}, E_G^{+}, E_G^{-}, L_G) =
\begin{tikzpicture}[baseline=10pt]
\draw (0,1) node[v, label=above:{1}](1){};
\draw (0,0) node[v, label=below:{2}](2){};
\draw (1,0) node[l, label=below:{3}](3){};
\draw (1,1) node[l, label=above:{4}](4){};
\draw[] (1)--(2);
\draw[double, thick] (1)--(3);
\draw [] (1)--(4);
\draw[dashed] (2)--(3);
\draw[dashed] (2)--(4);
\end{tikzpicture}, \qquad
\begin{cases}
V_{G} = \{1,2,3,4\}, \\
E_G^{+} = \{\{1,2\}, \{1,3\}, \{1,4\}\}, \\
E_G^{-} = \{\{1,3\}, \{2,3\},\{2,4\}\}, \\
L _G= \{3,4\}.
\end{cases}
\end{align*}
\end{example}
Let $ G^{+}=(V_{G}, E_{G}^{+}) $ and $ G^{-}=(V_{G},E_{G}^{-}) $, then
we have an alternative notation $G=(G^{+},G^{-},L_{G})$ for the signed graph $G$.
An unsigned simple graph $G$ may be regarded as a signed graph $ G=(G,K_\ell^\circ,\emptyset) $, where $K_{\ell}^\circ$ denotes the edgeless graph on $\ell$ vertices.
A signed graph $ (G^{+},G^{-},\emptyset) $ is called \textbf{loopless}, which is also denoted by $ (G^{+}, G^{-}) $.
Let $ E_{G} $ denote the edge set $ E_{G}^{+} \sqcup E_{G}^{-} \sqcup L_{G} $.
For a positive integer $ \ell $, let $ [\ell] $ denote the set $ \{1, \dots, \ell \} $.
From now on, we suppose that $ G $ is a signed graph on vertices $ [\ell] $.
Let $ (x_{1}, \dots, x_{\ell}) $  be a basis for the $ \ell $-dimensional vector space $ (\C^{\ell})^{\ast} $.
For $ \alpha \in (\C^{\ell})^{\ast} $, let $ \{\alpha=0\} $ denote the hyperplane $ \{ v \in \C^{\ell} ~|~ \alpha(v)=0 \} $.
\begin{definition}
Given a signed graph $G$, let $\mathcal{A}(G)$ be the hyperplane arrangement in $\C^{\ell} $ consisting of the following hyperplane
$$\{x_{i}-x_{j}=0\} \text{ for }  \{i,j\} \in E_{G}^{+},$$
$$\{x_{i}+x_{j}=0\} \text{ for } \{i,j\} \in E_{G}^{-},$$
$$\{x_{i}=0\} \text{ for } i \in L_{G}.$$
We will call $\mathcal{A}(G)$ the \textbf{signed graphic arrangement} associated to the signed graph $G$.
\end{definition}

Given a signed graph it is natural to introduce the following function.
\begin{definition} Given a sign graph $G=(V_{G},E_{G}^{+},E_{G}^{-},L_{G})$, the \textbf{sign function} of $G$ is the function $sgn\colon E_{G}^{+}\cup E_{G}^{-}\cup L_G\to\{+,-\}$ defined by
$$sgn(e)=\left\{
    \begin{array}{rl}
      + & \text{if } e\in E_{G}^{+},\\
      - & \text{if } e\in E_{G}^{-}\cup L_G.
    \end{array} \right.$$
\end{definition}
We can naturally extend the previous definition to path in $G$
\begin{definition} Given $P=e_1e_2\cdots e_k$ a path in $G$, the \textbf{sign} of $P$ is $sgn(P)=\prod_{i=1}^ksgn(e_i)$.
\end{definition}
\begin{definition} A cycle $C$ in a sign graph $G$ is called \textbf{balanced} if $sgn(C)=+$.
\end{definition}
Given a sign graph $G$ and a function $\sigma\colon V_G\to\{+,-\}$, we can define a new sign graph $G'$ that has the same underlying graph as $G$ but with a different sign function. In particular, if $e=\{i,j\}\in E_G$ then $sgn_{G'}(e)=\sigma(i)sgn_G(e)\sigma(j)$.
\begin{definition} In the previous construction, we will call $G'$ the \textbf{switching of $G$ by $\sigma$} and we will denote it by $G^\sigma$. In this case, $\sigma$ is called a \textbf{switching function for $G$}.
\end{definition}
\begin{definition} Given two sign graph $G_1$ and $G_2$ with the same underlying graph, we will say they are \textbf{switching equivalent} and write $G_1\backsim G_2$, if there exists a switching function $\sigma$ such that $G_2=G_1^\sigma$.
\end{definition}
\begin{proposition}[Proposition 3.2, \cite{zaslavsky1982signed}] Two signed graphs with the same underlying graph are switching equivalent if and only if they have the same list of balanced circles.
\end{proposition}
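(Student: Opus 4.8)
The plan is to prove both implications, the substance lying entirely in the converse. For the forward direction, suppose $G_2=G_1^\sigma$ for some switching function $\sigma$, and let $C$ be any circle with consecutive vertices $v_1,v_2,\dots,v_k,v_1$ and edges $e_i=\{v_i,v_{i+1}\}$ (indices taken modulo $k$). I would simply compute
$$sgn_{G_2}(C)=\prod_{i=1}^k \sigma(v_i)\,sgn_{G_1}(e_i)\,\sigma(v_{i+1})=\Big(\prod_{i=1}^k\sigma(v_i)^2\Big)\prod_{i=1}^k sgn_{G_1}(e_i)=sgn_{G_1}(C),$$
since each vertex of $C$ is an endpoint of exactly two of its edges, so every factor $\sigma(v_i)$ occurs squared and contributes $+$. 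Hence a circle is balanced in $G_1$ if and only if it is balanced in $G_2$, and the two graphs share the same list of balanced circles.

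For the converse the key idea is to pass to the ``product'' signed graph. On the common underlying graph, define a signed graph $H$ by $sgn_H(e)=sgn_{G_1}(e)\,sgn_{G_2}(e)$ for every edge $e$. For any circle $C$ one then has $sgn_H(C)=sgn_{G_1}(C)\,sgn_{G_2}(C)$, so the hypothesis that $G_1$ and $G_2$ have the same balanced circles says exactly that every circle of $H$ is balanced. It therefore suffices to establish the following normal form: a signed graph all of whose circles are balanced is switching equivalent to its all-positive version, i.e.\ there is $\sigma\colon V\to\{+,-\}$ with $sgn_H(\{i,j\})=\sigma(i)\sigma(j)$ for every edge $\{i,j\}$. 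Granting this, the same $\sigma$ gives $sgn_{G_1^\sigma}(\{i,j\})=\sigma(i)\,sgn_{G_1}(\{i,j\})\,\sigma(j)=sgn_{G_1}(\{i,j\})\,sgn_H(\{i,j\})=sgn_{G_2}(\{i,j\})$, so $G_1^\sigma=G_2$ and the graphs are switching equivalent.

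To construct $\sigma$ I would argue one connected component at a time: fix a spanning tree $T$ of the component, pick a root $r$ with $\sigma(r)=+$, and set $\sigma(v)$ equal to the $H$-sign of the unique $T$-path from $r$ to $v$. For a tree edge $\{i,j\}$ the identity $sgn_H(\{i,j\})=\sigma(i)\sigma(j)$ holds by a telescoping of the defining path. For a non-tree edge $\{i,j\}$, consider its fundamental circle, namely $\{i,j\}$ together with the $T$-path from $i$ to $j$; its balancedness forces $sgn_H(\{i,j\})$ to equal the sign of that $T$-path, which in turn equals $\sigma(i)\sigma(j)$ because the two root paths cancel on the shared segment up to the least common ancestor of $i$ and $j$. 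This produces the required $\sigma$. The main obstacle is precisely this last step: setting up the spanning-tree potential and verifying, through the fundamental circles, that it reproduces every edge sign, since the non-tree edges are where the balancedness hypothesis is genuinely consumed. I would finally note that loops, being fixed by any switching (as $\sigma(i)^2=+$) and always negative in the present convention, contribute identical unbalanced circles to both graphs and hence pose no difficulty.
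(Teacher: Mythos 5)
Your proof is correct. The paper does not prove this statement at all --- it is quoted as Proposition 3.2 of Zaslavsky's \emph{Signed graphs} --- and your argument (the squaring of vertex signs around a circle for the forward direction, and the spanning-tree potential applied to the ``product'' signed graph $H$ for the converse, which is exactly Harary's balance criterion) is the standard proof of that result, so you have in effect supplied the self-contained justification the paper omits. The only point worth a passing remark is the case of an edge lying in both $E_G^+$ and $E_G^-$ (a digon): switching by $\sigma(i)\sigma(j)=-$ permutes the two parallel edges but preserves the sets $E^+$ and $E^-$ on that pair, so your construction of $\sigma$ from $H$ still yields $G_1^\sigma=G_2$; like the loops you already address, this does not affect the argument.
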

\begin{proposition}[Corollary 5.4, \cite{zaslavsky1982signed}] Two signed graphs with the same underlying graph are switching equivalent if and only if they define the same matroid.
\end{proposition}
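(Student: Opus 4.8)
The plan is to reduce the statement to the previous proposition, which characterizes switching equivalence by the list of balanced circles, and then to show that this list is precisely the combinatorial data recorded by the matroid. Throughout, I identify the matroid of a signed graph $G$ with the matroid of linear dependencies among the defining linear forms of $\A(G)$: writing $\alpha_e$ for the form attached to an edge $e$, we have $\alpha_e = x_i - x_j$ for a positive edge $\{i,j\}$, $\alpha_e = x_i + x_j$ for a negative edge, and $\alpha_e = x_i$ for a loop at $i$. Since the two signed graphs share the same underlying graph, their matroids live on the same ground set $E_G$, and ``defining the same matroid'' means having the same collection of circuits (equivalently, the same rank function on $E_G$).

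First I would dispose of the easy implication, that switching equivalence forces the same matroid. If $G_2 = G_1^\sigma$, consider the diagonal linear automorphism $T$ of $(\C^\ell)^\ast$ sending $x_i \mapsto \sigma(i)\,x_i$. A direct check shows that $T$ carries the form of each edge of $G_1$ to a nonzero scalar multiple of the form of the corresponding edge of $G_2$: a positive edge $\{i,j\}$ gives $x_i - x_j \mapsto \sigma(i)x_i - \sigma(j)x_j$, which is $\pm(x_i - x_j)$ or $\pm(x_i + x_j)$ exactly according to the new sign $\sigma(i)\,sgn(e)\,\sigma(j)$, while a loop gives $x_i \mapsto \sigma(i)x_i$ and is fixed up to sign (consistent with loops being unaffected by switching, as $\sigma(i)^2 = +$). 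An invertible linear map preserves all linear (in)dependencies up to rescaling, so $G_1$ and $G_2$ define the same matroid.

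The heart of the argument is the converse, and the key lemma is a dictionary between balancedness and dependence: a subset $C \subseteq E_G$ that forms a cycle of the common underlying graph is a circuit of the matroid if and only if $C$ is balanced. Since the matroid is switching-invariant by the previous paragraph, I may normalize $C$ by a switching. A balanced cycle switches to an all-positive cycle $v_1 v_2 \cdots v_k v_1$, whose forms telescope,
\[
\sum_{t=1}^{k}\bigl(x_{v_t} - x_{v_{t+1}}\bigr) = 0,
\]
giving a dependence; as deleting any edge leaves a path, which is independent, the cycle is a circuit. An unbalanced cycle switches to one with a single negative edge, and tracking the coefficient of each $x_{v_t}$ in a putative relation $\sum_t c_t \alpha_{e_t} = 0$ forces all $c_t$ equal and then $2c_t = 0$ at the negative edge, so only the trivial relation survives and $C$ is independent. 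Equivalently, one may invoke the rank formula for signed graphic matroids, under which a connected subgraph on $v$ vertices has rank $v-1$ if balanced and $v$ if unbalanced; a cycle has as many edges as vertices, so it is dependent precisely when balanced. The main obstacle is carrying out this sign-tracking carefully and uniformly across the three edge types and confirming that loops, always unbalanced, introduce no exceptions.

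With the lemma in hand the converse is immediate. If $G_1$ and $G_2$ have the same underlying graph and the same matroid, then for every cycle $C$ of that common underlying graph, $C$ is a circuit of the matroid of $G_1$ if and only if it is a circuit of the matroid of $G_2$; by the lemma this says exactly that $C$ is balanced in $G_1$ if and only if it is balanced in $G_2$. Hence $G_1$ and $G_2$ possess the same list of balanced circles, and the previous proposition yields $G_1 \backsim G_2$, completing the equivalence.
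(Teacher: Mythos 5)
Your proof is correct, but note that the paper does not prove this proposition at all: it is quoted verbatim as Corollary 5.4 of Zaslavsky's paper and used as a black box (its only role here is to feed, together with the preceding proposition on balanced circles, into Corollary 3.13 on the invariance of $\phi_3$). So any comparison is really with Zaslavsky's original argument, which characterizes the full set of circuits of the signed-graphic matroid combinatorially (balanced cycles together with the various handcuff/theta configurations of unbalanced cycles) and reads off switching equivalence from that. Your route is leaner: you only need the much easier sub-statement that a cycle of the underlying graph is a circuit if and only if it is balanced, proved by normalizing the cycle via a switching and tracking coefficients in the linear forms $x_i\pm x_j$, $x_i$; you never have to classify the remaining circuits. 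The trade-off is that your argument is tied to the linear representation over $\C$ (you identify ``the matroid of $G$'' with the linear dependence matroid of the forms defining $\A(G)$, which is exactly the right reading in the context of this paper, and the division by $2$ in the unbalanced case is harmless in characteristic zero), whereas Zaslavsky's statement is about the abstractly defined frame matroid. Two small points worth making explicit if you write this up: the switching-invariance of the matroid (your diagonal map $T$) must be established first, since you use it to justify normalizing a single cycle; and the hypothesis ``same underlying graph'' is what lets you recognize which subsets of the common ground set are cycles before asking the matroid whether they are circuits. Both are present in your sketch, so there is no gap.
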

Using the previous results, we obtain
\begin{corollary}\label{corol:signequivsameinvar} Let $G_1$ and $G_2$ be two signed graph with the same underlying graph. If $G_1\backsim G_2$, then $\phi_3(\A(G_1))=\phi_3(\A(G_2)).$
\end{corollary}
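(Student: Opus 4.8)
The plan is to show that every quantity entering Falk's formula \eqref{eq:falktheorem1} is an invariant of the matroid of the arrangement, and then to convert switching equivalence into an equality of matroids via Zaslavsky's criterion.

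First I would recall that the Orlik-Solomon algebra $A^\bullet(\A)$, and more generally its $k$-adic truncations $A_k^\bullet(\A)$, depend only on the intersection poset $L(\A)$, equivalently on the matroid of $\A$. Indeed, by Definition \ref{def:osalgbr} the ideal $I(\A)$ is generated (in the central case) by the boundaries $\partial e_S$ of the dependent sets $S$, and the dependence of $S$ is exactly a matroid condition. Consequently the dimensions $\dim(A^2)$ and $\dim(A_2^3)$ appearing in \eqref{eq:falktheorem1} are determined by the matroid of $\A$. Since the number $n$ of hyperplanes equals the number of edges of the underlying graph, Theorem \ref{theo:falkinvar} exhibits $\phi_3(\A)$ as a function of $n$ and of the matroid of $\A$ alone; in particular $\phi_3$ is an integer-valued matroid invariant.

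Next I would translate switching equivalence into a statement about matroids. Since $G_1\backsim G_2$, the proposition quoting Corollary 5.4 of \cite{zaslavsky1982signed} tells us that $G_1$ and $G_2$ define the same matroid. Because the matroid of a signed graph $G$ coincides with the matroid of the signed graphic arrangement $\A(G)$ — the linear dependences among the normals $x_i-x_j$, $x_i+x_j$ and $x_i$ being precisely the circuits of the signed graphic matroid — the arrangements $\A(G_1)$ and $\A(G_2)$ carry the same matroid. As $G_1$ and $G_2$ share the same underlying graph, they have the same number of edges, so $\A(G_1)$ and $\A(G_2)$ consist of the same number $n$ of hyperplanes.

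Combining the two steps, all ingredients of \eqref{eq:falktheorem1} agree for $\A(G_1)$ and $\A(G_2)$, whence $\phi_3(\A(G_1))=\phi_3(\A(G_2))$. The only point that requires care — and the step I expect to be the main, if modest, obstacle — is the identification of the signed graphic matroid with the matroid of the arrangement, so that ``same matroid'' in Zaslavsky's sense really produces the same Orlik-Solomon data entering Falk's formula. Once this dictionary is in place, the corollary follows formally, and it is worth noting that Proposition 3.2 of \cite{zaslavsky1982signed} gives the alternative route through equality of the lists of balanced circles, which leads to the same conclusion.
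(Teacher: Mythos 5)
Your proposal is correct and follows essentially the same route as the paper, which derives the corollary directly from Zaslavsky's result that switching equivalent signed graphs define the same matroid together with the fact that Falk's formula \eqref{eq:falktheorem1} depends only on $n$ and the matroid (via the Orlik--Solomon data). The extra care you take in identifying the signed graphic matroid with the matroid of $\A(G)$ is a reasonable point to flag, but the paper treats this identification as implicit.
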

In this paper taking inspiration from graph theory and the study of hyperplane arrangements, we denote by $K_\ell$ a complete graph with $\ell$ vertices and all edges being positive, i.e. $K_\ell=(K_\ell, K_\ell^\circ,\emptyset)$, by $D_\ell$ a complete sign graph with $\ell$ vertices and no loops, i.e. $D_\ell=(K_\ell, K_\ell,\emptyset)$, and by $B_\ell$ a sign complete graph with $\ell$ vertices and a full set of loops, i.e. $B_\ell=(K_\ell, K_\ell, [\ell])$. Moreover, we denote by $K_\ell^\ell$ a complete graph with $\ell$ vertices, all edges being positive and a full set of loops, i.e. $K_\ell^\ell=(K_\ell, K_\ell^\circ, [\ell])$, by $D_\ell^1$ a complete sign graph with $\ell$ vertices and one loop, i.e. $D_\ell^1=(K_\ell, K_\ell,\{1\})$ and by $G_\circ$ the signed graph in Figure \ref{fig:double1loop}. Furthermore, if $G$ is a signed graph we denote but $\overline{G}$ a signed graph switching equivalent to $G$ for some switching function $\sigma$.

\begin{figure}[htbp]
\begin{tikzpicture}[baseline=10pt]
\draw (0,0) node[v](1){};
\draw (2,3) node[l](2){};
\draw (4,0) node[v](3){};
\draw[double, thick] (1)--(2);
\draw[double, thick] (2)--(3);
\draw [] (1)--(3);
\end{tikzpicture}
\caption{The sign graph $G_\circ$}
\label{fig:double1loop}
\end{figure}
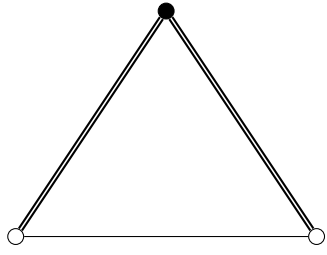

\section{Main Theorem}
In this section we describe how to compute the Falk invariant $\phi_3$ for $\mathcal{A}(G)$, a signed graphic arrangement associated to a signed graph $G$ that do not have  a subgraph isomorphic to $B_2$. In the remaining of the paper, to fix the notation we will suppose $G$ is a graph on $\ell$ vertices having $n$ edges, and we will label only the edges as elements of $[n]:=\{1,\dots, n\}$.

The goal of this section is to prove the following theorem.
\begin{theorem}\label{theo:ourmain} For a signed graphic arrangement associated to a signed graph $G$ not containing a subgraph isomorphic to $B_2$ as subgraph, we have
\begin{equation}\label{eq:ourmainformula}
\phi_3=2(k_3+k_4+d_3+d_{2,1}+k_{2,2}+k_{3,3}+g_\circ)+5d_{3,1},
\end{equation}
where $k_l$ denotes the number of subgraph of $G$ isomorphic to a $\overline{K_l}$, $d_l$ denotes the number of subgraph of $G$ isomorphic to $D_l$ but not contained in $D_l^1$, $d_{l,1}$ denotes the number of subgraph of $G$ isomorphic to $D_l^1$, $k_{l,l}$ denotes the number of subgraph of $G$ isomorphic to a $\overline{K_l^l}$ and $g_\circ$ denotes the number of subgraph of $G$ isomorphic to a $\overline{G_\circ}$ but not contained in $D_l^1$.
\end{theorem}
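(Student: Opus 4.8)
The plan is to start from the reduced Falk formula of Remark~\ref{rem:falkinvariantreduct}. A short manipulation shows $2\binom{n+1}{3}+\binom{n}{3}=n\binom{n}{2}$, so that formula~\eqref{eq:falktheorem} is equivalent to $\phi_3=n\dim(I^2)-\dim(I^3_2)$; and since $\dim(E^1\otimes I^2)=n\dim(I^2)$ while $\operatorname{im}(d)=I^3_2$, this is exactly $\phi_3=\dim\ker(d)$ for the map $d\colon E^1\otimes I^2\to E^3$, $d(e_i\otimes b)=e_i\wedge b$. I would therefore work directly with $d$, and throughout I would use Corollary~\ref{corol:signequivsameinvar} to replace each local configuration by a switching-equivalent normal form; this is precisely why the final count is phrased in terms of the switching classes $\overline{K_l}$, $\overline{K_l^l}$ and $\overline{G_\circ}$.

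The first genuine step is to classify the rank-$2$ flats of $\A(G)$ under the hypothesis that $G$ has no $B_2$ subgraph. I would show that forbidding $B_2$ forces every rank-$2$ flat to contain at most three hyperplanes, and that the flats of size exactly three — the \emph{triangular flats} — are of exactly three kinds: a balanced triangle (a $\overline{K_3}$), a vertex-loop together with a positive and a negative edge on an incident pair (a $D_2^1$), and two vertex-loops together with a single edge joining them (a $\overline{K_2^2}$). Writing $t$ for the number of triangular flats, counting pairs gives $\dim(A^2)=\binom{n}{2}-t$, hence $\dim(I^2)=t=k_3+d_{2,1}+k_{2,2}$. Moreover each triangular flat $X$ contributes a single relation $r_X=\partial e_X$ spanning the one-dimensional piece of $I^2$ supported on $X$, so that $E^1\otimes I^2=\bigoplus_X E^1\otimes\langle r_X\rangle$.

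Next I would split each summand $E^1\otimes\langle r_X\rangle$ into an \emph{internal} part ($e_i$ with $i\in X$) and an \emph{external} part ($e_i$ with $i\notin X$). A direct computation gives $e_i\wedge r_X=\pm e_X$, the same monomial for every $i\in X$, so the internal generators of each flat contribute a $2$-dimensional space to $\ker(d)$; as the monomials $e_X$ are pairwise distinct, and these spaces lie in different summands, they already account for a $2t$-dimensional piece of $\ker(d)$. For $i\notin X$ the element $e_i\wedge r_X$ is a signed sum of three monomials supported on four indices, and the only further dependences in $\ker(d)$ come from linear relations among these external images together with their coincidences with the internal monomials $e_X$ (which occur exactly when $\{i\}$ and two vertices of $X$ again form a triangular flat). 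The key technical claim is that every such dependence is supported on a single rank-$3$ flat, so that the extra contribution $E$ to $\phi_3$ localises as $E=\sum_Y e(\A_Y)$, the sum running over the rank-$3$ flats $Y$ and $e(\A_Y)$ being the dimension of the external kernel of the localisation $\A_Y$.

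The final step is to evaluate $e(\A_Y)$ for every rank-$3$ flat. Using the no-$B_2$ hypothesis I would enumerate the possible localisations and show that $e(\A_Y)=0$ unless $\A_Y$ is, up to switching, one of $\overline{K_4}$, $D_3$, $\overline{K_3^3}$, $\overline{G_\circ}$ or $D_3^1$, in which cases an explicit rank computation on the corresponding external generators yields $e=2,2,2,2$ and $e=5$ respectively. Rewriting the sum over flats as a sum over subgraphs then gives $E=2(k_4+d_3+k_{3,3}+g_\circ)+5d_{3,1}$; here the clauses ``not contained in $D_l^1$'' in the definitions of $d_3$ and $g_\circ$ are forced, because a $D_3$ or a $G_\circ$ sitting inside a $D_3^1$ spans the same rank-$3$ flat, whose localisation is the whole $D_3^1$ and is therefore already counted with weight $5$. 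Adding the internal $2t=2(k_3+d_{2,1}+k_{2,2})$ to $E$ yields formula~\eqref{eq:ourmainformula}. I expect the real difficulty to lie in this last step: proving that no external dependence straddles two distinct rank-$3$ flats (so that localisation is legitimate), carrying out the case-by-case rank computations, and in particular pinning down the $D_3^1$ localisation, whose anomalous value $5$ is what produces the exceptional coefficient in the theorem.
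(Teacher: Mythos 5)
Your proposal is correct and follows essentially the same route as the paper: the same classification of dependent triples into $\overline{K_3}$, $D_2^1$, $\overline{K_2^2}$, the same internal/external splitting (the paper's $C_3$ versus $F_3$), the same localisation of the external part at the five configurations $\overline{K_4}$, $D_3$, $\overline{K_3^3}$, $\overline{G_\circ}$, $D_3^1$, and the same case-by-case rank computations. The only difference is presentational: you track kernel dimensions ($e(\A_Y)=2,2,2,2,5$) where the paper tracks image dimensions of $\spann(F_3^i)$ ($10,10,10,10,19$ out of $12,12,12,12,24$ generators) and converts at the end via the identity $2\binom{n+1}{3}-n\binom{n}{2}+\binom{n}{3}=0$.
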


In order to compute $\phi_3$, we need firstly to identify the ordered $3$-tuple $S$ in $\{1, \dots, n\}$ that are dependent. Clearly, we have the following
\begin{lemma} $S=(i_1, i_2, i_3)$ is dependent if and only if $i_1, i_2, i_3$ correspond to the edges of a subgraph of $G$ that is isomorphic to a $\overline{K_3}$, or a $D_2^1$ or a $\overline{K_2^2}$.
\end{lemma}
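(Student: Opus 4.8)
The plan is to work directly at the level of linear algebra. Since $G$ is assumed central, every intersection $\cap S$ is nonempty, so by the definition of dependence the only thing to decide is whether the three linear forms $\alpha_{i_1},\alpha_{i_2},\alpha_{i_3}$ are linearly \emph{dependent}, i.e. whether the $3\times\ell$ coefficient matrix has rank at most $2$. Each form has one of three shapes: $x_i-x_j$ (positive edge), $x_i+x_j$ (negative edge), or $x_i$ (loop). Before the case analysis I would record that linear dependence of edge forms is invariant under switching: by the cited Proposition (Corollary~5.4 of \cite{zaslavsky1982signed}) switching-equivalent signed graphs define the same matroid, so I may replace any $\overline{K_3}$, $\overline{K_2^2}$, etc.\ by a convenient representative.

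For the implication $(\Leftarrow)$ I would simply exhibit an explicit relation for each of the three normal forms: for $K_3$ one has $(x_1-x_2)-(x_1-x_3)+(x_2-x_3)=0$; for $D_2^1$ one has $(x_1-x_2)+(x_1+x_2)-2x_1=0$; and for $K_2^2$ one has $(x_1-x_2)-x_1+x_2=0$. Switching invariance then promotes these to all of $\overline{K_3}$, $D_2^1$ and $\overline{K_2^2}$. I would also note here that $D_2^1$ needs no overline, since switching fixes loops and maps a two-signed digon to a two-signed digon, so the configuration is stable up to isomorphism.

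For the converse $(\Rightarrow)$ I would assume the three forms are dependent and split into cases according to the number $k\in\{0,1,2,3\}$ of loops among the three edges, using that a simple signed graph admits at most one positive and one negative edge between a fixed pair of vertices and at most one loop per vertex. When $k=3$ the forms are three distinct coordinate functions $x_a,x_b,x_c$ at distinct vertices, hence independent. When $k=2$, with loops at distinct $a,b$, any non-loop edge not joining $a$ to $b$ introduces a fresh variable appearing in a single form, forcing independence; thus the edge must be $\{a,b\}$, yielding $\overline{K_2^2}$. When $k=1$, a loop $x_a$ together with two non-loop edges is dependent only if those edges are the positive and negative edge on a common pair $\{a,b\}$ (otherwise some variable occurs in exactly one form), giving $D_2^1$. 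When $k=0$ a vertex-support count shows three non-loop edges spanning four or more vertices form a forest and are independent, a digon-plus-pendant configuration has rank $3$, and on three vertices one is left with a triangle; a direct check (or the matroid rank formula $|V|-b$) shows such a triangle is dependent exactly when balanced, i.e.\ isomorphic to $\overline{K_3}$.

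The routine part is the $(\Leftarrow)$ direction and the degenerate cases $k=2,3$; the main obstacle is the bookkeeping in the cases $k=0$ and $k=1$, where I must argue rigorously that introducing any vertex met by only one of the three edges forces the corresponding variable's coefficient to vanish and thereby kills the dependence, so that the only surviving configurations are the balanced triangle and the two-signed digon. Carefully ruling out the unbalanced triangle, the digon-with-pendant, and all forests is what pins the list down to exactly $\overline{K_3}$, $D_2^1$ and $\overline{K_2^2}$.
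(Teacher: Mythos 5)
Your proposal is correct. Note that the paper offers no proof of this lemma at all --- it is introduced with ``Clearly, we have the following'' --- so your case analysis is precisely the verification the authors leave implicit. Your argument is the natural one: the forward direction by exhibiting the explicit relations $(x_1-x_2)-(x_1-x_3)+(x_2-x_3)=0$, $(x_1-x_2)+(x_1+x_2)-2x_1=0$ and $(x_1-x_2)-x_1+x_2=0$ and invoking switching invariance of the matroid, and the converse by splitting on the number of loops and using the ``fresh variable'' principle (a vertex met by only one of the three edges forces that form's coefficient to vanish in any dependence). All four loop counts check out, including the observations that loops are fixed by switching and that an unbalanced triangle has rank $3$.

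Two small points of care in the $k=0$ case. First, the sentence ``three non-loop edges spanning four or more vertices form a forest'' is not literally true: a digon on $\{a,b\}$ together with an edge disjoint from $\{a,b\}$ spans four vertices and is not a forest. This costs nothing, since the disjoint edge introduces two fresh variables and your general principle disposes of it, but the clean statement is: if any vertex is met by exactly one of the three edges, the triple is independent; the only configurations with no such vertex are the triangle on three vertices and a digon plus a third edge on the same pair, and the latter is impossible since a signed graph carries at most one positive and one negative edge per pair. Second, in the digon-plus-third-edge analysis (both in $k=0$ and $k=1$) you should say explicitly that the two parallel edges must carry opposite signs, which is what makes $x_a-x_b$ and $x_a+x_b$ span $\langle x_a,x_b\rangle$ and forces the third form to be a loop form at $a$ --- exactly the $D_2^1$ configuration. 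With those clarifications the list $\overline{K_3}$, $D_2^1$, $\overline{K_2^2}$ is pinned down exactly as the lemma asserts.
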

With an abuse of notation, we will call a dependent $3$-tuple $S$ a \textbf{triangle}. Moreover, we will write
$$\mathcal{C}_3:=\{e_S\in E~|~S \text{ is a triangle}\}$$
which is a subset of $E$ as a vector space over $\C$.

\begin{remark} Notice that the triangles are exactly the balanced $3$-cycles together with the subgraphs isomorphic to $\overline{K_2^2}$. In particular, If $G_1$ and $G_2$ are two signed graph with the same underlying graph such that  $G_1\backsim G_2$, then $\mathcal{C}_3(G_1)=\mathcal{C}_3(G_2)$.
\end{remark}

Since $e_ie_je_k=-e_je_ie_k$, it is clear that the dimension of the vector space $\mathcal{C}_3$ is $k_3+d_{2,1}+k_{2,2}$. Moreover, we can consider $C'_3$ a basis of $\mathcal{C}_3$. Then each element of $C'_3$ is in a one-to-one correspondence of the subgraph of $G$ isomorphic to a $\overline{K_3}$, or a $D_2^1$ or a $\overline{K_2^2}$.

\begin{lemma}\label{lem:dimA2} $\dim(A^2)=\binom{n}{2}-k_3-d_{2,1}-k_{2,2}$.
\end{lemma}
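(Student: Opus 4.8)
The plan is to compute $\dim(A^2)$ through its defining exact relation $\dim(A^2)=\dim(E^2)-\dim(I^2)=\binom{n}{2}-\dim(I^2)$, so that everything reduces to determining $\dim(I^2)$. First I would pin down $I^2$ explicitly. Since $G$ gives a central arrangement, $I$ is generated by the elements $\partial e_S$ with $S$ dependent, and a degree count shows that the only such generators landing in $E^2$ come from dependent $3$-tuples: if $\eta\wedge\partial e_S\in E^2$ with $S$ dependent, then $\eta\in E^{3-|S|}$, forcing $|S|\le 3$, while $|S|=2$ is impossible in a simple central arrangement. Hence $I^2=\spann\{\partial e_S\mid S\text{ is a triangle}\}$, and in particular $\dim(I^2)\le k_3+d_{2,1}+k_{2,2}$, the number of triangles computed above.

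The heart of the argument is to show that these spanning vectors are in fact linearly independent. Writing $\partial e_S=e_{jk}-e_{ik}+e_{ij}$ for a triangle $S=\{i,j,k\}$ with $i<j<k$, the support of $\partial e_S$ in the standard monomial basis of $E^2$ is exactly the set of three pairs $\{e_{ij},e_{ik},e_{jk}\}$. Thus two triangles $S\ne S'$ have overlapping supports precisely when they share a common pair of edges, that is, when $|S\cap S'|=2$. If I can rule this out, the vectors $\{\partial e_S\}$ have pairwise disjoint nonempty supports and are therefore automatically linearly independent, giving $\dim(I^2)=k_3+d_{2,1}+k_{2,2}$ and hence the claimed formula.

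So the key step, and the main obstacle, is the structural claim that in a signed graphic arrangement with no $B_2$ subgraph no two distinct triangles share two edges. Suppose $S=\{a,b,c\}$ and $S'=\{a,b,d\}$ were both triangles with $c\ne d$. Since the arrangement is simple and central, $\alpha_a,\alpha_b$ are linearly independent, so both dependencies force $\alpha_c,\alpha_d\in\spann(\alpha_a,\alpha_b)$; hence the four hyperplanes $a,b,c,d$ lie in a common rank-$2$ flat, their normals spanning a fixed $2$-dimensional subspace $W$ of $(\C^\ell)^\ast$. I would then classify which $2$-dimensional subspaces $W$ can contain four distinct directions of the allowed forms $x_i$, $x_i-x_j$, $x_i+x_j$: a direct case analysis shows the only possibility is $W=\spann(x_i,x_j)$ with the four directions being exactly $x_i$, $x_j$, $x_i-x_j$, $x_i+x_j$. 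Translating back to $G$, these correspond to loops at $i$ and $j$, a positive edge $\{i,j\}$ and a negative edge $\{i,j\}$, that is, a $B_2$ subgraph, contradicting the hypothesis. This classification of rank-$2$ flats carrying four hyperplanes is where the no-$B_2$ condition is used in an essential way and is the step requiring the most care; once it is in place, disjointness of supports and hence the independence of the $\partial e_S$ follow immediately, completing the computation of $\dim(A^2)$.
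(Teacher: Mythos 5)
Your proof is correct and follows the same route as the paper: reduce to $\dim(A^2)=\binom{n}{2}-\dim(I^2)$ and show that $\dim(I^2)$ equals the number of triangles $k_3+d_{2,1}+k_{2,2}$. The paper simply asserts this count as the dimension of the span of the $\partial e_S$, whereas you actually prove their linear independence via the disjoint-support argument together with the classification of rank-two flats carrying four hyperplanes, and in doing so you correctly locate where the no-$B_2$ hypothesis enters --- indeed, inside a $B_2$ the boundaries of the four triangles satisfy one linear relation, so the naive count would overshoot by one for each such subgraph.
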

\begin{proof} By definition $A=E/I$, hence 
$$\dim(A^2)=\dim(E^2)-\dim(I^2)=\binom{n}{2}-\dim(I^2).$$
Since $I^2=\spann\{\partial e_{ijk}~|~e_{ijk}\in\mathcal{C}_3\}$, then $\dim(I^2)=k_3+d_{2,1}+k_{2,2}$, and the thesis follows.
\end{proof}
Using Theorem \ref{theo:falkinvar} and Remark \ref{rem:falkinvariantreduct}, to prove Theorem \ref{theo:ourmain} we just need to describe $\dim(I^3_2)$. To do so, consider 
$$C_3:=\{e_t\partial e_{ijk}~|~e_{ijk}\in C'_3,t\in\{i,j,k\}\},$$
and
$$F_3:=\{e_t\partial e_{ijk}~|~e_{ijk}\in C'_3,t\in[n]\setminus\{i,j,k\}\}.$$

By construction $I^3_2=I^2\cdot E^1=\spann\{e_t\partial e_{ijk}~|~e_{ijk}\in C'_3,t\in[n]\}$, and hence
$$I^3_2=\spann(C_3)+\spann(F_3).$$
\begin{lemma} For a signed graphic arrangement associated to a signed graph $G$ not containing a subgraph isomorphic to $B_2$ as subgraph, we have
$$I^3_2=\spann(C_3)\oplus\spann(F_3).$$
\end{lemma}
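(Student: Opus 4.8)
The plan is to establish the direct sum by showing that $\spann(C_3)$ and $\spann(F_3)$ are supported on disjoint subsets of the standard monomial basis of $E^3$; since distinct monomials are linearly independent in $E^3$, this forces $\spann(C_3)\cap\spann(F_3)=\{0\}$, and combined with the already established equality $I_2^3=\spann(C_3)+\spann(F_3)$ it yields the claim. I would first pin down $\spann(C_3)$. For a triangle $e_{ijk}\in C_3'$ with $i<j<k$ we have $\partial e_{ijk}=e_{jk}-e_{ik}+e_{ij}$, and a short computation (all cross terms vanishing through a repeated index) gives $e_i\partial e_{ijk}=e_j\partial e_{ijk}=e_k\partial e_{ijk}=e_{ijk}$. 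Hence each of the three generators of $C_3$ attached to a given triangle collapses to the single monomial $e_{ijk}$, so that $\spann(C_3)=\spann(\mathcal C_3)$ is precisely the span of the \emph{triangle monomials}, i.e. the $e_{abc}$ for which $\{a,b,c\}$ is a triangle.

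Next I would record that, for $t\in[n]\setminus\{i,j,k\}$, the generator $e_t\partial e_{ijk}$ is a signed combination of the three monomials indexed by $\{t,j,k\}$, $\{t,i,k\}$ and $\{t,i,j\}$, none of which vanishes since $t\notin\{i,j,k\}$. It therefore suffices to prove the purely combinatorial statement: \emph{if $\{i,j,k\}$ is a triangle and $t\in[n]\setminus\{i,j,k\}$, then no set $\{t\}\cup S$ with $S$ a two-element subset of $\{i,j,k\}$ is again a triangle.} Granting this, every monomial occurring in a generator of $F_3$ is a non-triangle monomial, so $\spann(F_3)$ lies in the complementary span of the non-triangle monomials, and the direct sum follows at once.

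I would prove the combinatorial statement by contradiction: a triangle $\{t\}\cup S$ shares exactly the two edges $S$ with the triangle $\{i,j,k\}$ and differs in the third edge, so I run through the types of the two triangles, recalling that a triangle is a balanced $3$-cycle $\overline{K_3}$, a $D_2^1$, or a $\overline{K_2^2}$. When the original triangle is a balanced $3$-cycle, any two of its edges form a path on three distinct vertices; this excludes the two-vertex types $D_2^1,\overline{K_2^2}$ for $\{t\}\cup S$ and forces $t$ to join the same two endpoints as the omitted edge $c$, whence (the graph carrying at most one edge of each sign on a pair) $t$ and $c$ have opposite signs and $\{t\}\cup S$ is unbalanced, a contradiction. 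The main obstacle is the loop-bearing types, where $S$ lives on a single pair $\{p,q\}$: the only fresh edges $t$ completing $S$ to a triangle supply either the missing sign of the edge $\{p,q\}$ or a missing loop at $p$ or $q$, and in every such configuration the original triangle together with $t$ already contains both the $+$ and the $-$ edge on $\{p,q\}$ as well as loops at both $p$ and $q$, i.e. a subgraph isomorphic to $B_2$. As $G$ has no such subgraph this cannot occur, and verifying that each potential conflict forces a $B_2$ is exactly the point at which the hypothesis is used.
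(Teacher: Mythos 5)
Your proof is correct and follows the same route as the paper: the paper's (two-sentence) proof likewise rests on the observation that, in the absence of a $B_2$, any two triangles share at most one edge, so the monomials supporting $\spann(C_3)$ (exactly the triangle monomials $e_{ijk}$) and those supporting $\spann(F_3)$ are disjoint subsets of the monomial basis of $E^3$. You have in addition supplied the full case analysis showing that two triangles sharing two edges would force a $B_2$, together with the linear-algebra step from disjoint supports to the direct sum --- both of which the paper asserts without proof --- so your writeup is the more complete of the two.
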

\begin{proof} Since $G$ do not contain a $B_2$ as subgraph, any two triangles shares at most one element. This then gives us that $\spann(C_3)\cap\spann(F_3)=\emptyset$.
\end{proof}
\begin{remark} Notice that if we allow $G$ to have subgraphs isomorphic to $B_2$, then the previous lemma is not true anymore.
\end{remark}
By the previous lemma, we can write
$$\dim(I^3_2)=\dim(\spann(C_3))+\dim(\spann(F_3))=k_3+d_{2,1}+k_{2,2}+\dim(\spann(F_3)).$$
To prove our main result we need to be able to compute $\dim(\spann(F_3))$. To do so, consider the following sets
$$F^1_3:=\{e_t\partial e_{ijk}~|~e_{ijk}\in C'_3,t\in[n]\setminus\{i,j,k\}, i,j,k \text{ are not in the same } \overline{K_4}, D_3, \overline{G_\circ}, D_3^1, \overline{K_3^3}\},$$
$$F^2_3:=\{e_t\partial e_{ijk}~|~e_{ijk}\in C'_3,t\in[n]\setminus\{i,j,k\}, i,j,k \text{ are in the same } \overline{K_4}\},$$
$$F^3_3:=\{e_t\partial e_{ijk}~|~e_{ijk}\in C'_3,t\in[n]\setminus\{i,j,k\}, i,j,k \text{ are in the same } D_3 \text{ but not same } D_3^1\},$$
$$F^4_3:=\{e_t\partial e_{ijk}~|~e_{ijk}\in C'_3,t\in[n]\setminus\{i,j,k\}, i,j,k \text{ are in the same } \overline{G_\circ} \text{ but not same } D_3^1\},$$
$$F^5_3:=\{e_t\partial e_{ijk}~|~e_{ijk}\in C'_3,t\in[n]\setminus\{i,j,k\}, i,j,k \text{ are in the same } D_3^1\},$$
$$F^6_3:=\{e_t\partial e_{ijk}~|~e_{ijk}\in C'_3,t\in[n]\setminus\{i,j,k\}, i,j,k \text{ are in the same } \overline{K_3^3}\},$$

\begin{lemma} For a signed graphic arrangement associated to a signed graph $G$ not containing a subgraph isomorphic to $B_2$, we have
$$\spann(F_3)=\bigoplus_{i=1}^6\spann(F^i_3).$$
\end{lemma}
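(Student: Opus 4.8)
The statement decomposes $\spann(F_3)$ as an internal direct sum of the six subspaces $\spann(F_3^i)$. Since the six index sets defining $F_3^1,\dots,F_3^6$ are defined by mutually exclusive conditions on the triangle $\{i,j,k\}$ — namely whether $i,j,k$ lie in no common special configuration, or in a common $\overline{K_4}$, $D_3$ (not $D_3^1$), $\overline{G_\circ}$ (not $D_3^1$), $D_3^1$, or $\overline{K_3^3}$ — and these conditions partition the generators of $F_3$, we immediately get $F_3 = \bigsqcup_{i=1}^6 F_3^i$ and hence $\spann(F_3)=\sum_{i=1}^6\spann(F_3^i)$. So the content of the lemma is entirely that this sum is \emph{direct}.

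**The plan.**

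The plan is to verify that the classification of dependent $3$-tuples forces each special configuration type to be uniquely determined by its triangle, and then to show that elements $e_t\partial e_{ijk}$ living in different $\spann(F_3^i)$ cannot interact. First I would recall that every generator of $F_3$ has the form $e_t\wedge\partial e_{ijk}$ with $\{i,j,k\}$ a triangle and $t$ an edge \emph{outside} the triangle; expanding, $e_t\partial e_{ijk} = e_te_{jk}-e_te_{ik}+e_te_{ij}$ is a signed sum of three basis monomials $e_S$ with $|S|=3$. The key combinatorial input is that, because $G$ contains no $B_2$, any two distinct triangles share at most one edge (this is exactly the fact used in the previous lemma to split off $\spann(C_3)$). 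I would leverage this sparsity to control which basis monomials $e_S$ appear in the expansions of the generators of each $F_3^i$.

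**Carrying out the directness argument.**

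The main step is to exhibit, for each $i$, a set of basis monomials $e_S$ ($|S|=3$, $S$ not a triangle, so genuinely surviving in $E^3$ and independent there) that appear in the expansion of some generator of $F_3^i$ but in no generator of $F_3^j$ for $j\neq i$. Concretely, the membership condition of a generator $e_t\partial e_{ijk}$ in $F_3^i$ is governed by the maximal special subgraph containing the triangle $\{i,j,k\}$; since the six configuration types ($\overline{K_4},D_3,\overline{G_\circ},D_3^1,\overline{K_3^3}$, and ``none'') are pairwise non-isomorphic and, under the no-$B_2$ hypothesis, the configuration type attached to a triangle is well defined, the supports of the monomials appearing across the six families are separated. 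I would make this precise by fixing any nonzero vector $v=\sum_i v_i$ with $v_i\in\spann(F_3^i)$ and tracking a basis monomial $e_S$ whose coefficient in $v$ is forced to equal its coefficient in a single $v_i$; vanishing of $v$ then forces each $v_i=0$.

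**The expected obstacle.**

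The hard part will be the bookkeeping in the overlapping cases: the configurations $D_3^1$, $\overline{G_\circ}$, and $D_3$ are closely related (a $D_3^1$ contains triangles of several of the underlying types), so I must be careful that the exclusionary clauses ``but not same $D_3^1$'' in the definitions of $F_3^3$ and $F_3^4$ genuinely prevent any triangle from being counted in two families, and that no cancellation across families can produce a shared monomial with matching support. I expect that a short case analysis of the triangle types occurring inside each configuration — using that distinct triangles meet in at most one edge — will show the supporting monomials of distinct families are disjoint, after which the direct-sum decomposition follows formally. If any residual interaction survives, it would have to come from a monomial $e_S$ expressible through two different configurations sharing two edges, which the no-$B_2$ hypothesis rules out.
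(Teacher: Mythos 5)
Your overall strategy coincides with the paper's: reduce the lemma to directness of the sum, then separate the summands by exhibiting basis monomials of $E^3$ that are forced to come from a single $\spann(F_3^i)$, with the no-$B_2$ hypothesis (any two triangles share at most one edge) as the combinatorial input. The problem is that the proposal stops exactly where the actual work begins. The entire substance of the paper's proof is the case analysis that you defer with ``I expect that a short case analysis \dots will show'': for each generator $e_t\partial e_{ijk}\in F_3^1$ one must check, according to whether the edge $t$ is adjacent to none, two, or all three of the edges $i,j,k$, and according to how many of $t,i,j,k$ are loops, that if all three monomials $e_{tjk},e_{tik},e_{tij}$ also occurred in generators of $F_3^2,\dots,F_3^6$, then the four edges $t,i,j,k$ would be forced into a common $\overline{K_4}$, $D_3$, $\overline{G_\circ}$, $D_3^1$ or $\overline{K_3^3}$, contradicting membership in $F_3^1$. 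This is where the hypotheses actually get used, and without carrying it out the lemma is not proved; the proposal is a plan, not a proof.

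Moreover, the separation you announce is stronger than what is true. You propose to show that ``the supporting monomials of distinct families are disjoint,'' but this fails: a monomial $e_{tjk}$ occurring in a generator $e_t\partial e_{ijk}$ of $F_3^1$ can also occur in a generator $e_j\partial e_{tkm}$ of, say, $F_3^2$, whenever $\{t,k,m\}$ happens to be a triangle lying in a $\overline{K_4}$ --- nothing in the no-$B_2$ hypothesis prevents this, since the two triangles $\{i,j,k\}$ and $\{t,k,m\}$ share only the edge $k$. The correct, weaker statement (and the one the case analysis delivers) is that \emph{at least one} of the three monomials of each $F_3^1$-generator appears in no other generator. You would also still need to separate $F_3^2,\dots,F_3^6$ from one another, which the paper handles by observing that two distinct special configurations cannot contain a common triangle once $B_2$ is excluded; your proposal gestures at this via the ``configuration type attached to a triangle is well defined'' but does not verify it for the genuinely overlapping types $D_3$, $\overline{G_\circ}$, $D_3^1$.
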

\begin{proof} Clearly, since $G$ does not contain a subgraph isomorphic to $B_2$, by construction $\spann(F^i_3)\cap\spann(F^j_3)=\emptyset$ for all $i,j=2, \dots, 6$ such that $i\ne j$.

For any element $e_t\partial e_{ijk}$ of $F^1_3$, we assert that at least one of the terms $e_{tjk}, e_{tik}, e_{tij}$ appears only in the expression of $e_t\partial e_{ijk}$. So $e_t\partial e_{ijk}$ can not be expressed linearly by the elements of $F^2_3, \dots, F^6_3$.

Since the edges $t, i, j, k$ are not in the same $\overline{K_4}, D_3, \overline{G_\circ}, D_3^1, \overline{K_3^3}$, and we do not consider the graphs having subgraphs isomorphic to $B_2$, we should consider three cases about the edge $t$: it can be adjacent to none of the edges $i,j,k$, to two of them, or to all of them.

Assume that the edge $t$ is adjacent to none of the edges $i,j,k$. This implies that $t$ and none of $i,j,k$ can appear in the same triangle. Hence any element $e_t\partial e_{ijk}$ of $F^1_3$ will not appear in any of $F^2_3, \dots, F^6_3$.

Assume now that the edge $t$ is adjacent to two of the edges $i,j,k$, then we should consider several possibilities.
Suppose that in the set $\{t,i,j,k\}$ there is no loop. If all the terms of the element $e_t\partial e_{ijk}\in F^1_3$ appear in $F^2_3, \dots, F^6_3$, then $t,i,j,k$ have to appear in the same $K_4$, but this is impossible by construction.
Suppose that $t$ is a loop and there is no loop in the set $\{i,j,k\}$. If all the terms of the element $e_t\partial e_{ijk}\in F^1_3$ appear in $F^2_3, \dots, F^6_3$, then $t,i,j,k$ have to appear in the same $\overline{G_\circ}$ or in the same $D_3^1$, but this is impossible by construction.
Suppose that $t$ is not a loop and there is one loop in the set $\{i,j,k\}$. In this case $i,j,k$ are the edges of a $D_2^1$. Hence, by assumption, the edges $t$ is not adjacent to the loop. If all the terms of the element $e_t\partial e_{ijk}\in F^1_3$ appear in $F^2_3, \dots, F^6_3$, then, also in this case, $t,i,j,k$ have to appear in the same $\overline{G_\circ}$ or in the same $D_3^1$, but this is impossible by construction.
Suppose that $t$ is not a loop and there are two loops in the set $\{i,j,k\}$. In this case $i,j,k$ are the edges of a $\overline{K_2^2}$. If all the terms of the element$e_t\partial e_{ijk}\in F^1_3$ appear in $F^2_3, \dots, F^6_3$, then $t,i,j,k$ have to appear in the same  $\overline{K_3^3}$, but this is impossible by construction.

Finally, assume that the edge $t$ is adjacent to all the edges $i,j,k$. In this situation, there are just two cases we should consider.
Suppose that in the set $\{t,i,j,k\}$ there is no loop. If all the terms of the element $e_t\partial e_{ijk}\in F^1_3$ appear in $F^2_3, \dots, F^6_3$, then $t,i,j,k$ have to appear in the same $D_3$, but this is impossible by construction.
Suppose that $t$ is not a loop and there is one loop in the set $\{i,j,k\}$. In this case $i,j,k$ are the edges of a $D_2^1$. If all the terms of the element $e_t\partial e_{ijk}\in F^1_3$ appear in $F^2_3, \dots, F^6_3$, then $t,i,j,k$ have to appear in the same $\overline{G_\circ}$ or in the same $D_3^1$, but this is impossible by construction. 

Therefore, for any element $e_t\partial e_{ijk}\in F^1_3$, at least one of the terms $e_{tjk}, e_{tik}, e_{tij}$ appears only in the expression of $e_t\partial e_{ijk}$. This shows that $\spann(F^i_3)\cap\spann(F^j_3)=\emptyset$ for all $i\ne j$. Since clearly
$$\spann(F_3)=\sum_{i=1}^6\spann(F^i_3)$$
this concludes the proof.
\end{proof}

%
%

\begin{example}\label{ex:G0comput} We consider the dimension of $\spann(F_3)$ for the sign graphic arrangement $A_3$ associated to the graph $G_\circ$ (see Figure \ref{fig:double1loop2}).

\begin{figure}[htbp]
\begin{tikzpicture}[baseline=10pt]
\draw (0,0) node[v](1){};
\draw (2,3) node[l, label=above:{6}](2){};
\draw (4,0) node[v](3){};
\draw[double, thick] (1)--(2);
\draw[double, thick] (2)--(3);
\draw[] (1)--(3);
\draw (1.3, 1.5) node {1};
\draw (0.9, 1.8) node {4};
\draw (2.7, 1.5) node {2};
\draw (3.1, 1.8) node {5};
\draw (2, 0.3) node {3};
\end{tikzpicture}
\caption{The sign graph $G_\circ$}
\label{fig:double1loop2}
\end{figure}

In this situation we have $E^+=\{1,2,3\}, E^-=\{4,5\}$ and $L=\{6\}$. Then
the number of the elements in $F_3$ is $12$, listed as follows.
$$ e_4 \partial e_{123}=e_{234}-e_{134}+e_{124}, e_5 \partial e_{123}=e_{235}-e_{135}+e_{125},$$
$$e_6 \partial e_{123}=e_{236}-e_{136}+e_{126}, e_1 \partial e_{345}=e_{145}-e_{135}+e_{134},$$
$$e_2 \partial e_{345}=e_{245}-e_{235}+e_{234}, e_6 \partial e_{345}=e_{456}-e_{356}+e_{346},$$
$$e_2 \partial e_{146}=e_{246}+e_{126}-e_{124}, e_3 \partial e_{146}=e_{346}+e_{136}-e_{136},$$
$$e_5 \partial e_{146}=-e_{456}+e_{156}+e_{145}, e_1 \partial e_{256}=e_{156}-e_{126}+e_{125},$$
$$e_3 \partial e_{256}=e_{356}+e_{236}-e_{235}, e_4 \partial e_{256}=e_{456}+e_{246}-e_{245}.$$
Then an easy computation shows that in this case $\dim(\spann(F_3))=10$. 
\end{example}

\begin{example}\label{exd31comput} We consider the dimension of $\spann(F_3)$ for the sign graphic arrangement associated to the graph $D_3^1$ (see Figure \ref{fig:d31graph}).

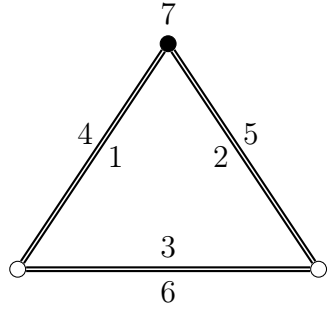
\begin{figure}[htbp]
\begin{tikzpicture}[baseline=10pt]
\draw (0,0) node[v](1){};
\draw (2,3) node[l, label=above:{7}](2){};
\draw (4,0) node[v](3){};
\draw[double, thick] (1)--(2);
\draw[double, thick] (2)--(3);
\draw[double, thick] (1)--(3);
\draw (1.3, 1.5) node {1};
\draw (0.9, 1.8) node {4};
\draw (2.7, 1.5) node {2};
\draw (3.1, 1.8) node {5};
\draw (2, 0.3) node {3};
\draw (2, -0.3) node {6};
\end{tikzpicture}
\caption{The sign graph $D_3^1$}
\label{fig:d31graph}
\end{figure}
In this situation we have $E^+=\{1,2,3\}, E^-=\{4,5,6\}$ and $L=\{7\}$. Then
the number of the elements in $F_3$ is $24$, listed as follows.
$$e_4 \partial e_{123}=e_{124}-e_{134}+e_{234}, e_5 \partial e_{123}=e_{125}-e_{135}+e_{235},$$
$$e_6 \partial e_{123}=e_{126}-e_{136}+e_{236}, e_7 \partial e_{123}=e_{127}-e_{137}+e_{237},$$
$$e_2 \partial e_{156}=-e_{125}+e_{126}+e_{256}, e_3 \partial e_{156}=-e_{135}+e_{136}+e_{356},$$
$$e_4 \partial e_{156}=-e_{145}+e_{146}+e_{456}, e_7 \partial e_{156}=e_{157}-e_{167}+e_{567},$$
$$e_1 \partial e_{246}=e_{124}-e_{126}+e_{146}, e_3 \partial e_{246}=-e_{234}+e_{236}+e_{346},$$
$$e_5 \partial e_{246}=e_{245}+e_{256}-e_{456}, e_7 \partial e_{246}=e_{247}-e_{267}+e_{467},$$
$$e_1 \partial e_{345}=e_{134}-e_{135}+e_{145}, e_2 \partial e_{345}=e_{234}-e_{235}+e_{245},$$
$$e_6 \partial e_{345}=e_{346}-e_{356}+e_{456}, e_7 \partial e_{345}=e_{347}-e_{357}+e_{457},$$
$$e_2 \partial e_{147}=-e_{124}+e_{127}+e_{247}, e_3 \partial e_{147}=-e_{134}+e_{137}+e_{347},$$
$$e_5 \partial e_{147}=e_{145}+e_{157}-e_{457}, e_6 \partial e_{147}=e_{146}+e_{167}-e_{467},$$
$$e_1 \partial e_{257}=e_{125}-e_{127}+e_{157}, e_3 \partial e_{257}=-e_{235}+e_{237}+e_{357},$$
$$e_4 \partial e_{257}=-e_{245}+e_{247}+e_{457}, e_6 \partial e_{257}=e_{256}+e_{267}-e_{567}.$$

Then an easy computation shows that in this case $\dim(\spann(F_3))=19$. 
\end{example}

\begin{remark}\label{rem:dimf3all} Similarly to the previous examples, we can directly compute $\dim(\spann(F_3))$ for several sign graph. In particular, if we consider $D_3, K_4$ and $K_3^3$, then $\dim(\spann(F_3))=10$. 
\end{remark}

\begin{lemma} $\dim(\spann(F_3^2))=10k_4$, $\dim(\spann(F_3^3))=10d_3$, $\dim(\spann(F_3^4))=10g_\circ$, $\dim(\spann(F_3^5))=19d_{3,1}$ and $\dim(\spann(F_3^6))=10k_{3,3}$.
\end{lemma}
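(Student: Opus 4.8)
The plan is to reduce each of the five equalities to the single-instance computations already carried out in Examples~\ref{ex:G0comput} and~\ref{exd31comput} and in Remark~\ref{rem:dimf3all}. For each index $i\in\{2,\dots,6\}$ the set $F_3^i$ is assembled from the triangles $\{i,j,k\}$ together with an extra edge $t$ lying in a common subgraph of the prescribed type, so I would first show that $\spann(F_3^i)$ splits as a direct sum
$$\spann(F_3^i)=\bigoplus_{H}\spann(F_3^i|_H),$$
where $H$ runs over the subgraphs of $G$ of the relevant type ($\overline{K_4}$ for $i=2$, $D_3$ not contained in a $D_3^1$ for $i=3$, $\overline{G_\circ}$ not contained in a $D_3^1$ for $i=4$, $D_3^1$ for $i=5$, and $\overline{K_3^3}$ for $i=6$), and $F_3^i|_H$ denotes the generators attached to $H$. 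Granting this, the lemma follows by computing $\dim(\spann(F_3^i|_H))$ for one instance and multiplying by the number of instances, namely $k_4,d_3,g_\circ,d_{3,1},k_{3,3}$ respectively.

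For a single instance $H$, the generators of $F_3^i|_H$ are exactly the $e_t\partial e_{ijk}$ with $\{i,j,k\}$ a triangle of $H$ and $t$ an edge of $H$ distinct from $i,j,k$; this is the generating set that appears when $G$ is the isolated graph treated in the examples. Hence $\dim(\spann(F_3^i|_H))$ equals $19$ for $H\cong D_3^1$ by Example~\ref{exd31comput}, and $10$ for $H\cong G_\circ$ by Example~\ref{ex:G0comput} and for $H\cong D_3,K_4,K_3^3$ by Remark~\ref{rem:dimf3all}. To cover the switched types $\overline{K_4},\overline{G_\circ},\overline{K_3^3}$ appearing in $F_3^2,F_3^4,F_3^6$, I would invoke Corollary~\ref{corol:signequivsameinvar} together with the fact that $\mathcal{C}_3$ is a switching invariant: switching alters the sign function but preserves the underlying graph and the list of balanced cycles, hence the set of triangles and all the monomials $e_{tij},e_{tik},e_{tjk}$ are unchanged, so the rank attached to $H$ agrees with that of its standard representative. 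This yields the per-instance dimensions $10$ and $19$ claimed above.

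The main obstacle is the direct sum decomposition across distinct instances. I would argue in the same spirit as the proof that $\spann(F_3)=\bigoplus_{i=1}^6\spann(F_3^i)$: since $G$ has no subgraph isomorphic to $B_2$, any two triangles of $G$ meet in at most one edge. Using this I would prove that every generator $e_t\partial e_{ijk}$ attached to $H$ carries at least one monomial among $e_{tij},e_{tik},e_{tjk}$ that occurs in no generator attached to any other instance; the existence of such a \emph{private} monomial forces any vanishing linear combination of elements of $F_3^i$ to vanish instance by instance, which gives the direct sum. The delicate part, which I expect to require a short case analysis according to the type of $H$ and to whether the edges $t,i,j,k$ are loops (exactly the cases enumerated in the proof of the previous lemma), is precisely the verification that a private monomial always exists, so that the local relations recorded in the examples never propagate between different instances. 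Finally, the \emph{not contained in a $D_3^1$} conditions attached to $d_3$ and $g_\circ$ guarantee that the index sets for $i=3,4,5$ are disjoint, so that no instance is counted twice and the total dimension is exactly the stated additive expression.
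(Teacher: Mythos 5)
Your proposal matches the paper's own proof: the paper likewise decomposes each $\spann(F_3^i)$ as a direct sum over the individual subgraphs of the relevant type (arguing, as you do, that monomials of a generator attached to one instance cannot occur in generators attached to another, since two distinct such subgraphs cannot share four edges), and then computes each per-instance dimension via switching equivalence (Corollary~\ref{corol:signequivsameinvar}) together with Examples~\ref{ex:G0comput} and~\ref{exd31comput} and Remark~\ref{rem:dimf3all}. The only difference is cosmetic: the paper writes the argument out for $F_3^4$ and declares the remaining four cases identical, whereas you describe all five uniformly and flag the private-monomial verification as the delicate step, which the paper asserts with the same brevity.
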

\begin{proof} Assume that in the sign graph $G$ there are exactly $g_\circ=p$ distinct subgraphs isomorphic to a $\overline{G_\circ}$, $G_1,\dots, G_p$, none of which is a subgraph of a graph isomorphic to $D_3^1$.  Consider
$$F^4_{3,i}:=\{e_t\partial e_{ijk}~|~e_{ijk}\in C'_3,t\in[n]\setminus\{i,j,k\}, i,j,k \in G_i\}.$$
Since four edges in the graph $G$ can not appear in two distinct $\overline{G_\circ}$ at the same time, then none of the terms of the element $e_t\partial e_{ijk}\in F^2_{3,i}$ appear in the elements of $F_3^4\setminus F^4_{3,i}$. This shows that
$$\spann(F^4_3)=\bigoplus_{i=1}^p \spann(F^4_{3,i}).$$
By Corollary \ref{corol:signequivsameinvar} and Example \ref{ex:G0comput}, we have that $\dim(\spann(F^4_{3,i}))=10$ for all $i=1,\dots, p.$ This then implies that
$$\dim(\spann(F^4_3))=\sum_{i=1}^p \dim(\spann(F^4_{3,i}))=10g_\circ.$$

Using Remark \ref{rem:dimf3all} and Example \ref{exd31comput}, the same exact argument used in this case will prove the other equalities.
\end{proof}

\begin{lemma}\label{lemm:dimI32} For a signed graphic arrangement associated to a signed graph $G$ not containing a subgraph isomorphic to $B_2$, we have
$$\dim(I^3_2)=(n-2)(k_3+d_{2,1}+k_{3,3})-2k_4-2d_3-2g_\circ-2k_{3,3}-5d_{3,1}.$$
\end{lemma}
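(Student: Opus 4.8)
The plan is to assemble the formula for $\dim(I^3_2)$ from the pieces already established in the preceding lemmas, tracking each dimension count carefully. From the decomposition $I^3_2=\spann(C_3)\oplus\spann(F_3)$ together with $\dim(\spann(C_3))=k_3+d_{2,1}+k_{2,2}$, I am reduced to computing $\dim(\spann(F_3))$ and substituting. Thus the core task is to count the total number of generators in $F_3$ and then subtract the relations, which are exactly encoded in the defect between the naive count and the true dimension of each block $\spann(F^i_3)$.

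First I would count $|F_3|$. Each triangle $e_{ijk}\in C'_3$ contributes one generator $e_t\partial e_{ijk}$ for each $t\in[n]\setminus\{i,j,k\}$, i.e. $n-3$ generators per triangle; since there are $k_3+d_{2,1}+k_{2,2}$ triangles, the family $F_3$ has $(n-3)(k_3+d_{2,1}+k_{2,2})$ elements. Next I would invoke the decomposition $\spann(F_3)=\bigoplus_{i=1}^6\spann(F^i_3)$, so that $\dim(\spann(F_3))=\sum_{i=1}^6\dim(\spann(F^i_3))$. The block $F^1_3$ consists of generators whose three terms do not collide with any other generator's terms, so these are linearly independent and $\dim(\spann(F^1_3))=|F^1_3|$. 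For the remaining blocks I would use the previous lemma, which gives $\dim(\spann(F^2_3))=10k_4$, $\dim(\spann(F^3_3))=10d_3$, $\dim(\spann(F^4_3))=10g_\circ$, $\dim(\spann(F^5_3))=19d_{3,1}$ and $\dim(\spann(F^6_3))=10k_{3,3}$.

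The main obstacle, and the step requiring the most care, is to determine $|F^1_3|$, equivalently to count how many generators fall into each of the special blocks $F^2_3,\dots,F^6_3$ and subtract. For each configuration one counts the generators $e_t\partial e_{ijk}$ in which $t$ together with the triangle $\{i,j,k\}$ lies in a single copy of $\overline{K_4}$, $D_3$, $\overline{G_\circ}$, $D_3^1$, or $\overline{K_3^3}$. The key accounting identity is that each such special subgraph on its full edge set accounts for a fixed number of generators, while each contributes a fixed number of \emph{triangles}; comparing the naive per-triangle count $(n-3)$ against the actual dimension of the corresponding block produces the correction terms. Concretely, the surplus in each block is the gap between $|F^i_3|$ and $\dim(\spann(F^i_3))$: a $\overline{K_4}$ (and likewise $D_3$, $\overline{K_3^3}$) yields a defect of $2$, the graph $\overline{G_\circ}$ a defect of $2$, and $D_3^1$ a defect of $5$, as can be read off from Examples \ref{ex:G0comput}, \ref{exd31comput} and Remark \ref{rem:dimf3all}.

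Putting the pieces together, I would write
\begin{align*}
\dim(\spann(F_3))&=\sum_{i=1}^6\dim(\spann(F^i_3))\\
&=(n-3)(k_3+d_{2,1}+k_{2,2})-2k_4-2d_3-2g_\circ-5d_{3,1}-2k_{3,3},
\end{align*}
where the negative terms record the defects just described. Adding $\dim(\spann(C_3))=k_3+d_{2,1}+k_{2,2}$ then gives
$$\dim(I^3_2)=(n-2)(k_3+d_{2,1}+k_{2,2})-2k_4-2d_3-2g_\circ-5d_{3,1}-2k_{3,3},$$
which matches the claimed formula (noting that the statement's leading factor should group $k_3+d_{2,1}+k_{2,2}$, the dimension of $\mathcal{C}_3$). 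The only genuinely delicate point is verifying that the special blocks partition precisely those generators removed from $F^1_3$ and that the hypothesis of no $B_2$ subgraph guarantees the directness of all these sums, which I would check by the same adjacency case analysis already carried out for the decomposition of $\spann(F_3)$.
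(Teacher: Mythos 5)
Your proof is correct and follows essentially the same route as the paper: count $|F_3|=(n-3)(k_3+d_{2,1}+k_{2,2})$, subtract for each special block the defect $|F^i_3|-\dim(\spann(F^i_3))$ (namely $2$ per $\overline{K_4}$, $D_3$, $\overline{G_\circ}$, $\overline{K_3^3}$ and $5$ per $D_3^1$, read off from the examples), and add $\dim(\spann(C_3))=k_3+d_{2,1}+k_{2,2}$. You also correctly observe that the $k_{3,3}$ in the leading factor of the stated formula should be $k_{2,2}$ (the number of triangles), which is consistent with the paper's own computation and with the final formula for $\phi_3$.
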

\begin{proof} By the previous lemmas
$$\dim(\spann(F_3))=\sum_{i=1}^6 \dim(\spann(F^i_3))=$$
$$=[(n-3)(k_3+d_{2,1}+k_{3,3})-12k_4-12d_3-12g_\circ-12k_{3,3}-24d_{3,1}]+ $$
$$ +10k_4+10d_3+10g_\circ+10k_{3,3}+19d_{3,1}=$$
$$ (n-3)(k_3+d_{2,1}+k_{3,3})-2k_4-2d_3-2g_\circ-2k_{3,3}-5d_{3,1}.$$
The thesis follows from the equality
$$\dim(I^3_2) = k_3+d_{2,1}+k_{2,2}+\dim(\spann(F_3)).$$
\end{proof}

\begin{proof}[Proof of Theorem \ref{theo:ourmain}]
By Remark \ref{rem:falkinvariantreduct} and Lemma \ref{lem:dimA2} we have
$$\phi_3=2\binom{n+1}{3}-n(\binom{n}{2}-k_3-d_{2,1}-k_{2,2})+\binom{n}{3}-\dim(I^3_2).$$
Because $2\binom{n+1}{3}-n\binom{n}{2} +\binom{n}{3}=0$, then from Lemma \ref{lemm:dimI32} we obtain
$$\phi_3=2(k_3+k_4+d_3+d_{2,1}+k_{2,2}+k_{3,3}+g_\circ)+5d_{3,1}.$$
\end{proof}

Let us see how our formula works on a non-trivial example.
\begin{example} We want to compute $\phi_3$ for the arrangement associated to the graph $G$ of Figure \ref{fig:finalexample}.

\begin{figure}[htbp]
\begin{tikzpicture}[baseline=10pt]
\draw (0,3) node[l, label=above:{11}](1){};
\draw (0,0) node[v](2){};
\draw (3,0) node[v](3){};
\draw (3,3) node[v](4){};
\draw (-0.2,1.5) node {7};
\draw (0.2,1.5) node {1};
\draw (1.5,2.7) node {2};
\draw (1.5,3.3) node {8};
\draw (2.8,1.5) node {3};
\draw (1.5,0.3) node {4};
\draw (1.5,-0.3) node {10};
\draw (0.8,1.2) node {5};
\draw (2.3,1.2) node {9};
\draw (1.8,0.8) node {6};
\draw[double, thick] (1)--(2);
\draw[double, thick] (1)--(3);
\draw[double, thick] (1)--(4);
\draw[double, thick] (2)--(3);
\draw[] (2)--(4);
\draw[] (3)--(4);
\end{tikzpicture}
\caption{The sign graph $G$}
\label{fig:finalexample}
\end{figure}
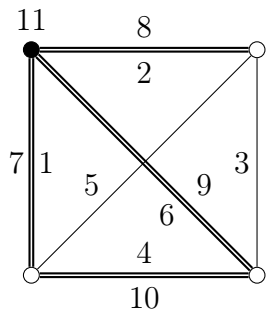
In this situation we have $E^+=\{1,2,3,4,5,6\}, E^-=\{7,8,9,10\}$ and $L=\{11\}$. In order to compute $\phi_3$ with the formula \eqref{eq:ourmainformula}, we need to compute the following:
\begin{itemize}
\item $k_3=|\{\{1,2,5\},\{1,4,6\},\{2,3,6\},\{3,4,5\},\{1,9,10\},\{6,7,9\},\{4,7,9\},$ $\{3,8,9\},\{5,7,8\}\}=9;$
\item $k_4=|\{\{1,2,3,4,5,6\},\{3,4,5,7,8,9\}\}|=2;$
\item $d_3=0;$
\item $d_{2,1}=|\{\{1,7,11\},\{6,9,11\},\{2,8,11\}\}|=3;$
\item $k_{2,2}=0;$
\item $k_{3,3}=0;$
\item $g_\circ=|\{\{1,2,5,7,8,11\},\{2,3,6,8,9,11\}\}|=2;$
\item $d_{3,1}=|\{\{1,4,6,7,9,10,11\}\}|=1.$
\end{itemize} 
From formula \eqref{eq:ourmainformula}, we obtain $$\phi_3=2(9+2+0+3+0+0+2)+5=37.$$ Notice that if we would try to compute the dimension of $F_3$ directly, we would have to write $96$ equations in the $e_{ijk}$.
\end{example}

\paragraph{\textbf{Acknowledgements}} The authors thank Professor Yoshinaga for the valuable discussions. The second authors also thanks Doctor Suyama and Doctor Tsujie for the valuable discussions on signed graphs.

During the preparation of this paper the second author was supported by the MEXT grant for Tenure Tracking system.

\bibliography{Bibliofile}{}
\bibliographystyle{plain}

\end{document}